\newtheorem{theorem}{Theorem}[section]
\newtheorem{definition}[theorem]{Definition}
\newtheorem{lemma}[theorem]{Lemma}
\newtheorem{corollary}[theorem]{Corollary}
\newenvironment{proof}{{\bf Proof.  }}{$\square$}
\newcommand{\Rmnum}[1]{\expandafter\@slowromancap\romannumeral #1@}
\begin{document}
\title{Gr\"{o}bner-Shirshov Bases and Hilbert Series of Free Dendriform Algebras\footnote{Supported by the
NNSF of China  (No.10771077) and the NSF of Guangdong Province
 (No.06025062).}}
\author{
 Yuqun Chen and Bin Wang\\
{\small \ School of Mathematical Sciences, South China Normal University}\\
{\small Guangzhou 510631, P. R. China}\\
{\small  yqchen@scnu.edu.cn}\\
{\small windywang2004@126.com}}

\date{}

\maketitle \noindent\textbf{Abstract:}    In this paper, we give a
Gr\"{o}bner-Shirshov basis of the free dendriform algebra as a
quotient algebra of an $L$-algebra. As applications, we obtain a
normal form of the free dendriform algebra. Moreover,  Hilbert
series and Gelfand-Kirillov dimension of  finitely generated free
dendriform algebras are obtained.

\noindent \textbf{Key words:} Gr\"{o}bner-Shirshov basis;
$L$-algebra; dendriform algebra; Hilbert series; Gelfand-Kirillov
dimension.

\noindent \textbf{AMS 2000 Subject Classification}:  13P10, 16S15,
 17D99, 16P90.
 \section{Introduction}

The theories of Gr\"obner-Shirshov bases and Gr\"obner bases were
invented independently by A.I. Shirshov (\cite{Sh}, 1962) for
(commutative, anti-commutative) non-associative alegbras, by H.
Hironaka (\cite{H}, 1964) for infinite series algebras (both formal
and convergent) and by B. Buchberger (first publication in
\cite{bu65}, 1965) for polynomial algebras. Gr\"obner--Shirshov
technique is very useful in the study of presentations of many kinds
of algebras defined by generators and defining relations.

An $L$-algebra (see \cite{Ph}) is a vector space over a field $k$
with two operations $\prec,~\succ$ satisfying one identity: $(x\succ
y)\prec z = x\succ (y\prec z)$. A dendriform algebra (see
\cite{Lo01,Lo02})is an $L$-algebra with two identities:  $(x \prec
y)\prec z = x \prec (y \prec z) + x \prec (y \succ z)$ and $x
\succ(y \succ z) = (x \succ y)\succ z + (x \prec y)\succ z.$

The Composition-Diamond lemma for  $L$-algebras is established in a
recent paper \cite{Huang}. In this paper, by using the
Composition-Diamond lemma for $L$-algebras in \cite{Huang}, we give
a Gr\"{o}bner-Shirshov basis of the free dendriform algebra as a
quotient algebra of a free $L$-algebra and then a normal form of a
free dendriform algebra is obtained. As applications, we obtain the
Hilbert series and Gelfand-Kirillov dimension of the free dendriform
algebra generated by a finite set.

\section{$L$-algebras}

We first introduce some concepts and results from the literature
which are related to the Gr\"{o}bner-Shirshov bases for
$L$-algebras. We will use some definitions and notations which are
mentioned in \cite{Huang}.

Let $k$ be a field, $X$  a set of variables,  $\Omega$  a set of
multilinear operations, and
$$
\Omega=\cup_{n\geq1}\Omega_n,
$$
where $\Omega_n=\{\delta_{i}^{(n)}|i\in I_n\}$ is the set of $n$-ary
operations, $n=1,2,\dots$. Now, we define ``$\Omega$-words".

Define
$$
(X,\Omega)_0 = X.
$$

For $m \geqslant 1$, define
$$
(X,\Omega)_m = X \cup \Omega((X,\Omega)_{m-1})
$$
where
$$
\Omega((X,\Omega)_{m-1}) = \cup_{t=1}^\infty \{
\delta_{i}^{(t)}(u_1, u_2, \dots, u_t) |~\delta_{i}^{(t)} \in
\Omega_t, u_j \in\ (X,\Omega)_{m-1}\}.
$$

Let
$$
(X,\Omega)=\bigcup_{m=0}^\infty (X,\Omega)_m.
$$
Then each element in $(X,\Omega)$ is called an $\Omega$-word.

\begin{definition}\cite{Ph}\label{lalg}
An $L$-algebra is a $k$-vector space $L$ equipped with two bilinear
operations $\prec,~\succ : L^{\bigotimes 2} \rightarrow L$ verifying
the so-called entanglement relation:
$$
(x \succ y)\prec z = x \succ ( y \prec z), \forall~x, y ,z \in L.
$$
\end{definition}

Let $\Omega=\{\prec, \succ\}$. In this case, we call an
$\Omega$-word as an $L$-word.
\begin{definition}\cite{Huang}\label{l}
An $L$-word $u$ is a normal $L$-word if $u$ is one of the following:
\begin{enumerate}
\item[i)]\  $u=x$, \ where $x\in X$.
\item[ii)]\ $u=v\succ w$, where $v$ and $w$ are normal $L$-words.
\item[iii)]\ $u=v\prec w$ with $v\neq v_1\succ v_2$, where $v_1,\ v_2,\ v,\ w$ are normal $L$-words.
\end{enumerate}
We denote $u$ by $[u]$ if $u$ is a normal $L$-word.
\end{definition}

We denote the set of all the normal $L$-words by $N$. Then, the free
L-algebra has an expression $L(X)=kN=\{\sum\alpha_iu_i~|~\alpha_i\in
k,\ u_i\in N\}$ with $k$-basis $N$ and the operations $\prec,\
\succ$: for any $u,v\in N$,
$$
u\prec v=[u\prec v], \ \ \ \ u\succ v=[u\succ v].
$$
Clearly, $[u\succ v]=u\succ v$ and
$$
[u\prec v]=\left\{
\begin{array} {ll} u\prec v &\mbox{if } u=u_1\prec u_2,\mbox{or } u\in X,\\
u_1\succ[u_2\prec v] &\mbox{if } u=u_1\succ u_2.
\end{array} \right.
$$

 Now, we order $N$ in the same way as in \cite{Huang}.

Let $X$ be a well ordered set. We denote $\succ$ by $\delta_1$,
$\prec$ by $\delta_2$. For any normal $L$-word $u$, define

\begin{numcases}{wt(u)=}
   (1,x),  & if $u=x \in X$;\nonumber\\
   (|u|, \delta_i, u_1, u_2), & if $u=\delta_i(u_1,u_2)\in
   N$,\nonumber
  \end{numcases}
where $|u|$ is the number of $x\in X$ in $u$. Then we order $N$ as
follows:
$$
u>v\Longleftrightarrow wt(u)>wt(v) ~~~~~~~lexicographically
$$
by induction on $|u|+|v|$, where $\delta_2>\delta_1$.

Let $\star \not\in X$. By a $\star$-$L$-word we mean any expression
in $( X \cup \{\star\},\{\prec,\succ\})$ with only one occurrence of
$\star$.

Let $u$ be a $\star$-$L$-word and $s \in L(X)$. Then we call $u|_s =
u|_{\star\mapsto s}$ an $s$-word in $L(X)$.

An $s$-word $u|_s$ is called a normal $s$-word if
$u|_{\overline{s}}\in N$.

It is shown in \cite{Huang} that the above ordering  on $N$ is
monomial in the sense that for any $\star$-$L$-word $w$ and any $u,\
v\in N$, $u>v$ implies $[w|_u]>[w|_v]$.

Assume that $L(X)$ is equipped with the monomial ordering $>$ as
above. For any $L$-polynomial $ f\in L(X)$, let $\bar{f}$ be the
leading normal $L$-word of $f$. If the coefficient of $\bar{f}$ is
$1$, then $f$ is called monic.

\begin{definition}\cite{Huang}
Let $f,~g \in L(X)$ are two monic polynomials.
\begin{enumerate}
\item[1)] \ Composition of right multiplication.

If $\overline{f}=u_1\succ u_2$ for some $u_1,\ u_2\in N$, then for
any $v\in N$, $f\prec v$ is called a composition of right
multiplication.

\item[2)] \  Composition of inclusion.

If $ w=\overline{f}=u|_{\overline{g}}$ where $u|_{g}$ is a normal
$g$-word, then
$$
(f,g)_{w}=f-u|_g
$$
is called the composition of inclusion and $w$ is called the
 ambiguity of the composition $(f,g)_\omega$.

\end{enumerate}

\end{definition}

\begin{definition}\cite{Huang}
Let the ordering on $N$ be as before, $S\subset L(X)$ a monic set
and $f,~g\in S$.

\begin{enumerate}
\item[1)]\ The composition of right multiplication $f\prec v$ is called
trivial modulo $S$, denoted by $f\prec v\equiv 0 \ mod(S)$, if
$$
f\prec v=\sum{\alpha}_iu_i|_{s_i},
$$
where each $\alpha_i\in k, \ s_i\in S, \ u_i|_{s_i}$ normal
$s_i$-word, and $u_i|_{\overline{s_i}}\leqslant \overline{f\prec
v}$.

\item[2)]\ The composition of inclusion $(f,g)_{w}$ is called trivial modulo
$(S, w)$, denoted by $(f,g)_{w}\equiv 0 \ mod(S,w)$,  if
$$
(f,g)_{w}=\sum{\alpha}_iu_i|_{s_i},
$$
where each $\alpha_i\in k, \ s_i\in S, \ u_i|_{s_i}$ normal
$s_i$-word, and $u_i|_{\overline{s_i}}<w$.

\end{enumerate}

$S$ is called a Gr\"{o}bner-Shirshov basis in $L(X)$ if any
composition of polynomials in $S$ is trivial modulo $S$ (and $w)$.
\end{definition}

\begin{theorem}\cite{Huang} \label{cdl}
(Composition-Diamond lemma for $L$-algebras) \ Let $S\subset L(X)$
be a monic set and the ordering on $N$ as before. Let $Id(S)$ be the
ideal of $L(X)$ generated by $S$. Then the following statements are
equivalent:
\begin{enumerate}
\item[(\Rmnum{1})] \ $S$ is a Gr\"{o}bner-Shirshov basis in $L(X)$.
\item[(\Rmnum{2})] \
$f\in Id(S)\Rightarrow \overline{f}=u|_{\overline{s}}$ for some
$s\in S$, where $u|_s$ is a normal $s$-word.
\item[(\Rmnum{3})] \
The set $ Irr(S)=\{u\in N|~u\neq v|_{\overline{s}},\ s\in S,\
v|_{s}\mbox{ is a normal } s\mbox{-word}\} $ is a $k$-basis of the
L-algebra $L(X|~S)=L(X)/Id(S)$.
\end{enumerate}
\end{theorem}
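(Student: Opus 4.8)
The plan is to prove the cyclic chain $(\mathrm{I})\Rightarrow(\mathrm{II})\Rightarrow(\mathrm{III})\Rightarrow(\mathrm{I})$, the standard scheme for a Composition--Diamond lemma. Throughout I would rely on two preliminary facts. First, the monomial property of the ordering recorded above, namely $u>v\Rightarrow[w|_u]>[w|_v]$, which guarantees that leading words are compatible with $\prec,\succ$ and with substitution into $\star$-$L$-words. Second, a structural description of the ideal: every element of $Id(S)$ is a finite $k$-linear combination $\sum_i\alpha_i\,u_i|_{s_i}$ of normal $s_i$-words with $s_i\in S$. This I would establish first, by verifying that the span of all such combinations is already closed under $\prec$ and $\succ$ on both sides (using the normal-form rewriting rules for $[u\prec v]$ given above), so that it must coincide with $Id(S)$.

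For $(\mathrm{I})\Rightarrow(\mathrm{II})$, take $0\neq f\in Id(S)$ and, among all representations $f=\sum_i\alpha_i\,u_i|_{s_i}$ as combinations of normal $s_i$-words, choose one for which $w:=\max_i\overline{u_i|_{s_i}}$ is minimal in the well-order. Since $\overline{f}\le w$ always, it suffices to show $w=\overline{f}$, for then $\overline{f}=u_j|_{\overline{s_j}}$ for the term attaining the maximum and $(\mathrm{II})$ holds. Suppose instead $w>\overline{f}$; then the terms with $\overline{u_i|_{s_i}}=w$ must cancel at the top, so at least two of them share the leading word $w$, and the occurrences of $\overline{s_i},\overline{s_j}$ inside $w$ either are disjoint, overlap, nest, or sit in the right-multiplication pattern. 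The disjoint case reduces directly since the two rewrites commute; a genuine overlap or nesting is killed by the triviality of $(f,g)_w$; and the situation in which a leading word $\overline{s_i}=u_1\succ u_2$ is followed by $\prec v$, so that the $\prec$ is pushed inward by $[(u_1\succ u_2)\prec v]=u_1\succ[u_2\prec v]$, is exactly the composition of right multiplication. In every case triviality modulo $S$ (and $w$) rewrites the offending pair as normal $s$-words with leading words strictly below $w$, yielding a representation with smaller $w$ and contradicting minimality.

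For $(\mathrm{II})\Rightarrow(\mathrm{III})$ I would treat spanning and independence separately. Spanning: given $f\in L(X)$, if $\overline{f}\notin Irr(S)$ then $\overline{f}=u|_{\overline{s}}$, and subtracting the appropriate scalar multiple of the normal $s$-word $u|_s$ strictly lowers the leading word; induction on the well-order then expresses $f$ modulo $Id(S)$ as a combination of elements of $Irr(S)$. Independence: if $\sum_i\alpha_iu_i\in Id(S)$ with the $u_i\in Irr(S)$ distinct and some $\alpha_i\neq0$, then the leading word of this nonzero element of $Id(S)$ is one of the $u_i$, which by $(\mathrm{II})$ must equal some $v|_{\overline{s}}$, contradicting membership in $Irr(S)$; hence all $\alpha_i=0$.

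For $(\mathrm{III})\Rightarrow(\mathrm{I})$, let $h$ denote any composition arising from $S$, either a right multiplication $f\prec v$ or an inclusion $(f,g)_w$; in both cases $h\in Id(S)$ because $S\subset Id(S)$ and $Id(S)$ is an ideal. Running the reduction from the spanning step, I rewrite $h$ as a combination of normal $s$-words plus a remainder in the span of $Irr(S)$; since that remainder is again in $Id(S)$, the independence half of $(\mathrm{III})$ forces it to vanish, so $h$ itself is a combination of normal $s$-words. Each reduction step subtracts a normal $s$-word whose leading word is at most $\overline{h}$, and one has $\overline{(f,g)_w}<w$ while $\overline{f\prec v}$ bounds the right-multiplication case; thus the representations obtained satisfy precisely the inequalities in the definition of triviality, and every composition is trivial, i.e. $S$ is a Gr\"{o}bner--Shirshov basis. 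I expect the case analysis in $(\mathrm{I})\Rightarrow(\mathrm{II})$ — in particular isolating the composition of right multiplication forced by the asymmetric normal form of $\prec$-words — to be the main obstacle, since this is the step where the $L$-algebra structure departs from the classical associative pattern.
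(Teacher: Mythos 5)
The paper itself contains no proof of this theorem: it is quoted from \cite{Huang}, so there is no in-paper argument to compare yours against. Your cyclic scheme $(\mathrm{I})\Rightarrow(\mathrm{II})\Rightarrow(\mathrm{III})\Rightarrow(\mathrm{I})$ is the standard Composition--Diamond architecture and is the one followed in the cited reference, and your identification of the right-multiplication composition as the genuinely non-classical ingredient is correct.

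There is, however, one concrete gap, and it sits exactly at that non-classical point. Your first preliminary fact --- that $Id(S)$ \emph{unconditionally} equals the span of normal $s$-words, to be proved by showing that this span is closed under $\prec$ and $\succ$ on both sides --- is false. Take $S=\{a\succ b\}$ with $a,b,c\in X$. In $L(X)$ one has $(a\succ b)\prec c=a\succ(b\prec c)$ by the normalization rule $[u_1\succ u_2\prec v]=u_1\succ[u_2\prec v]$, and the normal $L$-word $a\succ(b\prec c)$ contains no occurrence of $\overline{s}=a\succ b$ as a subword; since $s$ is a monomial, every normal $s$-word is a normal $L$-word containing $a\succ b$ as a subword, so $s\prec c\in Id(S)$ lies outside their span. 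The span is therefore not closed under right multiplication by $\prec c$ --- this failure is precisely why the composition of right multiplication appears in the definition of a Gr\"obner--Shirshov basis for $L$-algebras. The correct form of your lemma is conditional: \emph{assuming all right-multiplication compositions of $S$ are trivial modulo $S$}, every $s$-word, hence every element of $Id(S)$, is a combination of normal $s_i$-words with $u_i|_{\overline{s_i}}$ bounded by the leading word one starts from. Since the only place you actually need the lemma is inside $(\mathrm{I})\Rightarrow(\mathrm{II})$, where hypothesis $(\mathrm{I})$ supplies exactly that triviality, the repair is local; but the step ``establish this first'' as you describe it would fail. The remainder of your outline --- the minimal-$w$ cancellation argument with its disjoint/overlap/right-multiplication trichotomy, the division algorithm for $(\mathrm{II})\Rightarrow(\mathrm{III})$, and the reduction-plus-linear-independence argument for $(\mathrm{III})\Rightarrow(\mathrm{I})$ --- is sound.
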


\section{Gr\"{o}bner-Shirshov bases for free dendriform algebras}

In this section, we give a Gr\"{o}bner-Shirshov basis of the free
dendriform algebra $DD(X)$ generated by $X$. As an application, we
obtain a normal form of $DD(X)$.

\begin{definition}\cite{Lo01}\label{d}
A dendriform algebra is a $k$-vector space $DD$ with two bilinear
operations $\prec,~\succ$ subject to the  three axioms below: for
any $x,~y,~z\in DD,$
\begin{enumerate}
\item[1)] $(x \succ y)\prec z = x \succ ( y \prec z),$
\item[2)] $(x \prec y)\prec z = x \prec (y \prec z) + x \prec (y \succ
z),$
\item[3)] $x \succ(y \succ z) = (x \succ y)\succ z + (x \prec y)\succ
z.$
\end{enumerate}
\end{definition}

Thus, any dendriform algebra is an $L$-algebra.

It is clear that the free dendriform algebra generated by $X$,
denoted by $DD(X)$, has an expression
\begin{eqnarray*}
L(X&|&(x \prec y)\prec z = x \prec (y \prec z) + x \prec (y \succ
z),\\
&&  x \succ(y \succ z) = (x \succ y)\succ z + (x \prec y)\succ z,\
x,y,z\in N).
\end{eqnarray*}

The following theorem gives a Gr\"{o}bner-Shirshov basis for
$DD(X)$.

\begin{theorem}\label{GSBl}
Let the ordering on $N$ be as before. Let
\begin{eqnarray*}
f_1(x,y,z)& =&(x\prec y)\prec z - x \prec (y \prec z) -x \prec (y
\succ z),\\
f_2(x,y,z)&=&(x\prec y)\succ z + (x \succ y)\succ z-x\succ (y\succ
z),\\
f_3(x,y,z,v)&=&((x\succ y)\succ z)\succ v - (x\succ y)\succ (z\succ
v) + (x\succ(y\prec z))\succ v.
\end{eqnarray*}
Then, $S=\{f_1(x,y,z),~f_2(x,y,z),~f_3(x,y,z,v)|~x,y,z,v \in N\}$ is
a Gr\"{o}bner-Shirshov basis in $L(X)$.
\end{theorem}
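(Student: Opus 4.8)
The plan is to invoke the Composition--Diamond lemma (Theorem~\ref{cdl}): for the monic set $S$ it suffices to list every composition of its elements and verify that each is trivial modulo $S$ (and the ambiguity $w$). The first step is to read off the leading monomials from the ordering. Comparing weights by degree first, then by $\prec=\delta_2>\delta_1=\,\succ$, and then lexicographically in the two arguments, one gets $\overline{f_1}=(x\prec y)\prec z$, $\overline{f_2}=(x\prec y)\succ z$ and $\overline{f_3}=((x\succ y)\succ z)\succ v$, and each $f_i$ is monic. One point needs care: these displayed words are normal only when the leftmost arguments are not of the form $x_1\succ x_2$. When they are, the rule $[(x_1\succ x_2)\prec y]=x_1\succ[x_2\prec y]$ forces a re-bracketing; for instance $f_1(x_1\succ x_2,y,z)=x_1\succ f_1(x_2,y,z)$ is a left multiple of a lower element of $S$, while $f_2(x_1\succ x_2,y,z)$ acquires the leading word $((x_1\succ x_2)\succ y)\succ z=\overline{f_3}(x_1,x_2,y,z)$ and is therefore already reducible by $f_3$. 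Hence no new leading data arises from those instances, and the composition analysis may be carried out on the generic leading monomials above.

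Next I would separate the compositions into the two types. Only $\overline{f_2}$ and $\overline{f_3}$ are $\succ$-words, so the compositions of right multiplication are $f_2\prec t$ and $f_3\prec t$ for $t\in N$. Applying $[(u_1\succ u_2)\prec t]=u_1\succ[u_2\prec t]$ term by term (each term is $\succ$-rooted) gives $f_2\prec t=f_2(x,y,[z\prec t])$ and $f_3\prec t=f_3(x,y,z,[v\prec t])$, both of which already lie in $S$; so every right-multiplication composition is trivial. This reflects the fact that $f_2$ and $f_3$ are arranged so that their last argument absorbs a right multiplication.

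The inclusion compositions carry the real content. Matching the root operation and the left-child operation of the three leading monomials, the only genuine overlaps are: $\overline{f_1}$ inside $\overline{f_1}$ (put $x=a\prec b$), $\overline{f_1}$ inside $\overline{f_2}$ (again $x=a\prec b$, the copy sitting in the $\prec$-rooted left slot), $\overline{f_2}$ inside $\overline{f_3}$ (put $x=a\prec b$ at the innermost $\succ$), and two self-overlaps of $\overline{f_3}$, where the $\succ$-comb $((x\succ y)\succ z)\succ v$ overlaps a second copy sharing two internal nodes (via $x=a\succ b$, ambiguity of degree $5$) or one internal node (via $x=(a\succ b)\succ c$, ambiguity of degree $6$). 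For each I would form $(f,g)_w=f-u|_g$ and rewrite modulo $S$, at every stage replacing a leading monomial by the lower terms of its relation and renormalizing by the bracketing rules, while checking that all monomials produced stay strictly below the ambiguity $w$. The $\overline{f_1}$-overlap and the $\overline{f_2}$--$\overline{f_1}$ overlap use only $f_1,f_2$ together with the entanglement relation $(x\succ y)\prec z=x\succ(y\prec z)$ built into $L(X)$.

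I expect the main obstacle to be the two self-overlaps of $\overline{f_3}$ together with the $\overline{f_3}$--$\overline{f_2}$ overlap. These are higher-degree identities whose reductions are the longest, and they require all three relations at once, interleaved with re-bracketing of $\prec$-words into normal form. Conceptually $f_3$ records the failure of $\succ$ to be associative --- indeed, combining axiom~3 with entanglement gives $((x\succ y)\succ z)\succ v=(x\succ y)\succ(z\succ v)-(x\succ(y\prec z))\succ v$, which is exactly $f_3=0$ --- so the self-compositions of $\overline{f_3}$ play the role of the associativity (pentagon-type) coherence check, and the delicate part is confirming that the two rewriting branches close to the same value below $w$. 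Once all the listed compositions have been shown trivial, Theorem~\ref{cdl} gives that $S$ is a Gr\"obner--Shirshov basis in $L(X)$.
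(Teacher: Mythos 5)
Your setup is the same as the paper's, and everything you actually establish is correct: the leading words $\overline{f_1}=(x\prec y)\prec z$, $\overline{f_2}=(x\prec y)\succ z$, $\overline{f_3}=((x\succ y)\succ z)\succ v$ are right; your observation that the right-multiplication compositions close up literally, $f_2(x,y,z)\prec t=f_2(x,y,[z\prec t])$ and $f_3(x,y,z,v)\prec t=f_3(x,y,z,[v\prec t])$, is a cleaner version of the paper's reductions in its cases 1) and 2); and your list of genuine overlaps --- $\overline{f_1}$ in $\overline{f_1}$, $\overline{f_1}$ in $\overline{f_2}$, $\overline{f_2}$ in $\overline{f_3}$, and the two self-overlaps of $\overline{f_3}$ in degrees $5$ and $6$ --- coincides exactly with the paper's ambiguities $w_{3.4}$, $w_{6.4}$, $w_{10.5}$, $w_{11.5}$, $w_{11.6}$ (the paper additionally lists, and dismisses as easy, the inclusion compositions in which one leading word sits inside a variable slot of another; your remark on the instances with $x=x_1\succ x_2$ is a point the paper glosses over).

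The gap is that you never perform the reductions of these five inclusion compositions: you only announce that you ``would form $(f,g)_w=f-u|_g$ and rewrite modulo $S$,'' and you yourself flag the $f_3$-overlaps as the delicate point. Those reductions are the entire content of the theorem. Whether $S$ is a Gr\"obner--Shirshov basis is decided precisely by whether each $(f,g)_w$ collapses to a combination of normal $s$-words with leading words below $w$, and this is not automatic --- it is exactly where a wrongly chosen $f_3$ would fail. The paper devotes essentially its whole proof to three representative computations (cases 6.4, 10.5 and 11.6), each a multi-step rewriting in which the intermediate re-bracketings by the entanglement relation and the bound on every produced monomial must be verified. Until you carry out at least the overlaps involving $f_3$ and confirm that the two rewriting branches meet below $w$, what you have is a correct and well-organized plan rather than a proof.
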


\begin{proof}
All the possible compositions of right multiplication in $S$ are as
follows.

1) $f_2(x,y,z)\prec u,\ u\in N$. We have
\begin{eqnarray*}
~~f_2(x,y,z) \prec u &=& ((x \prec y)\succ z) \prec u + ((x \succ
y) \succ z) \prec u - (x \succ(y \succ z)) \prec u \\
&=& (x \prec y)\succ(z \prec u) + (x \succ y)\succ (z \prec u) - x
\succ ((y \succ z)\prec u) \\
&\equiv & -(x \succ y)\succ(z \prec u) + x \succ(y \succ(z \prec u))
+ (x \succ y)\succ (z \prec u) \\
& & - x \succ((y \succ z)\prec u)\\
&\equiv& 0~mod(S).
\end{eqnarray*}

~2) $f_3(x,y,z,v)\prec u,\ u\in N$. We have
\begin{eqnarray*}
~~f_3(x,y,z,v)\prec u &=& (((x\succ y)\succ z)\succ v)\prec u -
((x\succ y)\succ (z\succ v))\prec u \\
& & + ((x\succ (y\prec z))\succ v)\prec u\\
&=& ((x\succ y)\succ z)\succ (v\prec u) - (x\succ y)\succ (z\succ
(v\prec u))\\
& & + (x\succ (y\prec z))\succ (v\prec u)\\
&\equiv & (x\succ y)\succ (z\succ (v\prec u)) - (x\succ (y\prec
z))\succ (v\prec u)\\
& &- (x\succ y)\succ (z\succ (v\prec u)) + ((x\succ (y\prec z))\succ
v)\prec u \\
&\equiv& 0~mod(S).
\end{eqnarray*}
We denote by $f_i\wedge f_j$ an inclusion composition of the
polynomials $f_i$ and $f_j, \ i,j=1,2,3$. All the possible
ambiguities in $S$ are listed as follows:

~3)  $f_1(x,y,z)\wedge f_1(a,b,c):$

~3.1\ $w_{3.1} = (x|_{((a \prec b)\prec c)}\prec y)\prec
z,$~~~~~~~~~~~~~~~~~~~~~~3.2\ $w_{3.2} = (x \prec y|_{((a \prec
b)\prec c)})\prec z,$

~3.3\ $w_{3.3} = (x \prec y)\prec z|_{((a \prec b)\prec
c)},$~~~~~~~~~~~~~~~~~~~~~~3.4\ $w_{3.4} = ((a\prec b)\prec c)\prec
z.$
\\

~4) $f_1(x,y,z)\wedge f_2(a,b,c):$

~4.1\ $w_{4.1} = (x|_{((a \prec b)\succ c)} \prec y)\prec
z$,~~~~~~~~~~~~~~~~~~~~~~4.2\ $w_{4.2} = (x \prec y|_{((a \prec
b)\succ c)})\prec z$,

~4.3\ $w_{4.3}= (x \prec y)\prec z|_{((a \prec b)\succ c)}$.
\\

~5)  $f_1(x,y,z)\wedge f_3(a,b,c,d):$

~5.1\ $w_{5.1} = (x|_{(((a\succ b)\succ c)\succ d)}\prec y)\prec
z$,~~~~~~~~~~~~~~~~~5.2\ $w_{3.2} = (x \prec y|_{(((a\succ b)\succ
c)\succ d)})\prec z$,

~5.3\ $w_{5.3} = (x\prec y)\prec z|_{(((a\succ b)\succ c)\succ d)}$.
\\

~6)  $f_2(a,b,c)\wedge f_1(x,y,z)$:

~6.1\ $w_{6.1} = (a|_{((x\prec y)\prec z)}\prec b)\succ
c,$~~~~~~~~~~~~~~~~~~~~~~6.2\ $w_{6.2} = (a\prec b|_{((x\prec
y)\prec z)})\succ c,$

~6.3\ $w_{6.3} = (a\prec b)\succ c|_{((x\prec y)\prec
z)},$~~~~~~~~~~~~~~~~~~~~~~6.4\ $w_{6.4} = ((x\prec y)\prec z)\succ
c.$
\\

~7) $f_2(a,b,c)\wedge f_2(x,y,z)$:

~7.1\ $w_{7.1} = (a|_{((x\prec y)\succ z)}\prec b)\succ
c$,~~~~~~~~~~~~~~~~~~~~~~~7.2\ $w_{7.2} = (a\prec b|_{((x\prec
y)\succ z)})\succ c$,

~7.3\ $w_{7.3} = (a\prec b)\succ c|_{((x\prec y)\succ z)}$.
\\

~8) $f_2(a,b,c)\wedge f_3(x,y,z,v)$:

~8.1\ $w_{8.1} = (a|_{(((x\succ y)\succ z)\succ v)}\prec b)\succ
c$,~~~~~~~~~~~~~~~~~8.2\ $w_{8.2} = (a\prec b|_{(((x\succ y)\succ
z)\succ v)})\succ c$,

~8.3\ $w_{8.1} = (a\prec b)\succ c|_{(((x\succ y)\succ z)\succ v)}$.
\\

 ~9) $f_3(x,y,z,v)\wedge f_1(a,b,c)$:

~9.1\ $w_{9.1} = ((x|_{((a \prec b)\prec c)}\succ y)\succ z)\succ
v$,~~~~~~~~~~~~~~~~~~~9.2\ $w_{9.2} = ((x\succ y|_{((a \prec b)\prec
c)})\succ z)\succ v$,

~9.3\ $w_{9.3} = ((x\succ y)\succ z|_{((a \prec b)\prec c)})\succ
v$,~~~~~~~~~~~~~~~~~~~9.4\ $w_{9.4} = ((x\succ y)\succ z)\succ
v|_{((a \prec b)\prec c)}$.
\\

~10) $f_3(x,y,z,v)\wedge f_2(a,b,c)$:

~10.1\ $w_{10.1} = ((x|_{((a \prec b)\succ c)}\succ y)\succ z)\succ
v$,~~~~~~~~~~~~~10.2\ $w_{10.2} = ((x\succ y|_{((a \prec b)\succ
c)})\succ z)\succ v$,

~10.3\ $w_{10.3} = ((x\succ y)\succ z|_{((a \prec b)\succ c)})\succ
v$,~~~~~~~~~~~~~10.4\ $w_{10.4} = ((x\succ y)\succ z)\succ v|_{((a
\prec b)\succ c)}$,

~10.5\ $w_{10.5} = (((a\prec b)\succ c)\succ z)\succ v$.
\\

~11) $f_3(x,y,z,v)\wedge f_3(a,b,c,d)$:

~11.1\ $w_{11.1} = ((x|_{(((a\succ b)\succ c)\succ d)}\succ y)\succ
z)\succ v$,

~11.2\ $w_{11.2} = ((x\succ y|_{(((a\succ b)\succ c)\succ
d)})\succ z)\succ v$,

~11.3\ $w_{11.3} = ((x\succ y)\succ z|_{(((a\succ b)\succ c)\succ
d)})\succ v$,

~11.4\ $w_{11.4} = ((x\succ y)\succ z)\succ v|_{(((a\succ b)\succ
c)\succ d)}$,

~11.5\ $w_{11.5} = (((a\succ b)\succ c)\succ d)\succ v$,

~11.6\ $w_{11.6} =((((a\succ b)\succ c)\succ d)\succ z)\succ v$.
\\

We will prove that all compositions are trivial mod$(S,w)$. Here,
for example, we only check Case 6.4, Case 10.5 and Case 11.6. The
others are easy to check.


Case 6.4:
\begin{eqnarray*}
&&(f_2(a,b,c), f_1(x,y,z))_{w_{6.4}}\\
&\equiv & (x\prec (y\prec z))\succ c + (x\prec (y\succ z))\succ c -
(x\prec y)\succ
(z\succ c)\\
&& + ((x\prec y)\succ z)\succ c\\
&\equiv & x\succ ((y\prec z)\succ c) - (x\succ (y\prec z))\succ c +
x\succ ((y\succ z)\succ c)\\
&& - (x\succ (y\succ z))\succ c - x\succ (y\succ(z\succ c)) +
(x\succ y)\succ (z\succ c)\\
&& + (x\succ (y\succ z))\succ c - ((x\succ y)\succ z)\succ c \\
&\equiv& x\succ (y\succ (z\succ c)) - (x\succ (y\prec z))\succ c -
x\succ
(y\succ (z\succ c))\\
&& + (x\succ y)\succ (z\succ c) - ((x\succ y)\succ z)\succ c\\
&\equiv & 0 ~mod(S, w_{6.4}).
\end{eqnarray*}

Case 10.5:
\begin{eqnarray*}
&&(f_3(x,y,z,v), f_2(a,b,c))_{w_{10.5}}\\
&\equiv & ((a\succ
(b\succ c))\succ z)\succ v - (((a\succ b)\succ c)\succ z)\succ v \\
&& - ((a\prec b)\succ c)\succ (z\succ v) + ((a\prec b)\succ (c\prec z))\succ v\\
&\equiv & (a\succ (b\succ c))\succ (z\succ v) - (a\succ ((b\succ
c)\prec z))\succ v \\
&& - ((a\succ b)\succ c)\succ (z\succ v) + ((a\succ b)\succ (c\prec
z))\succ v\\
&& - (a\succ (b\succ c))\succ
(z\succ v) + ((a\succ b)\succ c)\succ (z\succ v)\\
&& + (a\succ (b\succ (c\prec z))\succ v - ((a\succ b)\succ (c\prec
z))\succ v\\
&\equiv & 0~mod(S, w_{10.5}).
\end{eqnarray*}

Case 11.6:
\begin{eqnarray*}
&&(f_3(x,y,z,v), f_3(a,b,c,d))_{w_{11.6}}\\
&\equiv & (((a\succ
b)\succ (c\succ d))\succ z)\succ v - (((a\succ (b\prec c))\succ
d)\succ z)\succ v\\
&& - (((a\succ b)\succ c)\succ d)\succ (z\succ v) + (((a\succ
b)\succ c)\succ (d\prec z))\succ v\\
&\equiv & ((a\succ b)\succ (c\succ d))\succ (z\succ v) - ((a\succ
b)\succ ((c\succ d)\prec z))\succ v\\
&& - ((a\succ (b\prec c))\succ d)\succ (z\succ v) + ((a\succ (b\prec
c))\succ (d\prec z))\succ v\\
&& - ((a\succ b)\succ (c\succ d))\succ (z\succ v) + ((a\succ (b\prec
c))\succ d)\succ (z\succ v)\\
&& + ((a\succ b)\succ (c\succ (d\prec z))\succ v - ((a\succ (b\prec
c))\succ (d\prec z))\succ v\\
&\equiv& 0~mod(S, w_{11.6}).
\end{eqnarray*}

The proof is complete.
 \ \hfill\end{proof}

\begin{definition}\label{dd}
An $L$-word $u$ is called a normal $DD$-word, denoted by $\lceil
u\rceil$, if
\begin{enumerate}
\item[1)] $u = x$,  $x \in X$,
\item[2)] $u = x \prec \lceil v\rceil$, $x\in X$,
\item[3)] $u = x\succ \lceil v\rceil$, $x\in X$,
\item[4)] $u = (x\succ \lceil u_1\rceil)\succ \lceil u_2\rceil$, $x\in X$.
\end{enumerate}
\end{definition}

\noindent \textbf{Remark} \ From Definition \ref{l} and Definition
\ref{dd}, we know that any normal $DD$-word is a normal $L$-word.

The following corollary follows from Theorem \ref{cdl} and Theorem
\ref{GSBl}.

\begin{corollary}\label{cor1}
The set $Irr (S) = \{ u |~u ~\text{is a normal }DD\text{-word }\}$
is a $k$-basis of the free dendriform algebra $DD(X)$.
\end{corollary}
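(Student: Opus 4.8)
The plan is to obtain the corollary directly from the Composition--Diamond lemma. By Theorem \ref{GSBl} the set $S$ is a Gr\"obner--Shirshov basis, so the implication (\Rmnum{1})$\Rightarrow$(\Rmnum{3}) of Theorem \ref{cdl} shows that $Irr(S)$ is a $k$-basis of $L(X|S)=L(X)/Id(S)$. First I would verify that this quotient is exactly $DD(X)$. The relations $f_1,f_2$ are the dendriform axioms 2) and 3), while $f_3$ already lies in $Id(f_1,f_2)$: applying $f_2$ to $((x\succ y)\succ z)\succ v$ rewrites it as $(x\succ y)\succ(z\succ v)-((x\succ y)\prec z)\succ v$, and the entanglement relation built into $L(X)$ gives $(x\succ y)\prec z=x\succ(y\prec z)$, so the three terms of $f_3$ cancel modulo $f_2$. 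Hence $Id(S)=Id(f_1,f_2)$ and $L(X|S)=DD(X)$, and the whole problem reduces to identifying the set $Irr(S)$ of Theorem \ref{cdl}(\Rmnum{3}) with the set of normal $DD$-words of Definition \ref{dd}.

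Next I would compute the three leading words. Ordering the summands of each $f_i$ by their weights --- length first, then $\delta_2>\delta_1$, then the two arguments recursively --- a short calculation (the displayed summand is in each case the one that is heaviest on the left) yields
$$\overline{f_1}=(x\prec y)\prec z,\qquad \overline{f_2}=(x\prec y)\succ z,\qquad \overline{f_3}=((x\succ y)\succ z)\succ v.$$
Consequently $Irr(S)$ is precisely the set of normal $L$-words whose term-tree contains none of these three shapes as a subterm. The ``normal $s$-word'' bookkeeping in the definition of $Irr(S)$ is automatic here, since every subterm of a normal $L$-word is again normal, so each occurrence of an $\overline{f_i}$ inside a word of $N$ is a genuine normal subword.

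Finally I would translate these forbidden shapes into structural conditions and run an induction. Together $\overline{f_1}$ and $\overline{f_2}$ forbid any node whose left child is a $\prec$-node, while $\overline{f_3}$ forbids a left chain of three consecutive $\succ$-nodes. Combined with Definition \ref{l}(iii), which forbids a $\succ$-node as the left factor of a $\prec$-node, this forces the left factor of every $\prec$-node to be a single generator, and the left factor of every $\succ$-node to be either a generator or a word $x\succ(\cdots)$ with $x\in X$. Inducting on the number of generators of $u\in N$ then shows that $u\in Irr(S)$ exactly when $u$ has one of the four shapes $x$, $x\prec\lceil v\rceil$, $x\succ\lceil v\rceil$, $(x\succ\lceil u_1\rceil)\succ\lceil u_2\rceil$ with $x\in X$, i.e.\ when $u$ is a normal $DD$-word. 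I expect this last equivalence to be the main obstacle: the induction must be pushed through all four cases while simultaneously tracking the avoidance conditions coming from the leading words and the normality condition on $L$-words, and one must check that the recursion on $\lceil v\rceil,\lceil u_1\rceil,\lceil u_2\rceil$ closes up in both directions. Once $Irr(S)$ is identified with the set of normal $DD$-words, the corollary is immediate.
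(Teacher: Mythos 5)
Your proposal is correct and follows the same route as the paper, which derives the corollary directly from Theorem \ref{cdl} and Theorem \ref{GSBl} without writing out any details. The details you supply --- that $f_3=f_2(x\succ y,z,v)$ modulo the entanglement relation, so $L(X|S)=DD(X)$; the leading words $\overline{f_1}=(x\prec y)\prec z$, $\overline{f_2}=(x\prec y)\succ z$, $\overline{f_3}=((x\succ y)\succ z)\succ v$ under the stated ordering; and the identification of the normal $L$-words avoiding these shapes with the normal $DD$-words of Definition \ref{dd} --- all check out.
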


\section{Hilbert series and Gelfand-Kirillov dimension of the free dendriform algebra}

In this section, we give Hilbert series of the free dendriform
algebra $DD(X)$ where $|X|$ is finite. As an application, we prove
that Gelfand-Kirillov dimension of the free dendriform algebra
$DD(X)$ is infinite.

We introduce some basic definitions and concepts that we will use
throughout this section.

\begin{definition}
Let $V=(V,\prec, \succ)$ be a dendriform algebra. Then $V$ is called
a finitely graded algebra if
$$
V = \oplus_{m\geq 1}V_m
$$
as $k$-vector spaces  such that
$$
dim_kV_m < \infty \ \mbox{and }\ \  \delta (V_i, V_j)\subseteq
V_{i+j}\  \ \mbox{ for all } ~ i, j\geq 1, \ \delta\in
\{\prec,\succ\}.
$$
\end{definition}

\begin{definition}
Let $V = \oplus_{m\geq 1}V_m$ be a finitely graded dendriform
algebra and $dim_k(V_m)$, the dimension of the vector space $V_m$.
Then the Hilbert series of $V$ is defined to be
$$
\mathcal {H}(V, t) = \sum_{m=1}^\infty dim_k(V_m)t^m.
$$
\end{definition}

Let $X = \{x_1, x_2,\dots, x_n\}$ and $DD_m$ the subspace of $DD(X)$
 generated by all normal $DD$-words in $DD(X)$ of degree $m$. Then
$$
DD(X) = \oplus_{m\geq 1}DD_m
$$
is a finitely graded dendriform algebra.
\\

By the definition of normal $DD$-words, one has
$$
dim_k(DD_1)=n,~dim_k(DD_2)=2n^2.
$$
Assume that for any $m\geq 1$, $dim_k(DD_ m)= f(m)n^m$. Then
$f(1)=1,\ f(2)=2$. For convenience, let $f(0)=1$.

For any $m>2$, it is clear that $DD_m$ has a $k$-basis
\begin{eqnarray*}
&&\ \ \ \ \{x\prec \lceil u\rceil|x\in X,~|u|>1, \lceil
u\rceil\mbox{ is
a normal }DD\mbox{-word}\}\\
&&\ \ \bigcup \{x\succ \lceil u\rceil|x\in X,~|u|>1\lceil
u\rceil\mbox{ is
a normal }DD\mbox{-word}\}\\
&&\ \ \bigcup \{(x\succ \lceil u_1\rceil)\succ \lceil u_2\rceil|x\in
X,~|u_1|,~|u_2|\geq 1,\ \lceil u_1\rceil, \lceil u_2\rceil\mbox{ are
normal } DD\mbox{-words}\}.
\end{eqnarray*}
It follows that
\begin{eqnarray*}
f(m)&=&2\times f(m-1)+1\times 1\times f(m-2)+1\times f(m-2)\times
1 \\
&&+1\times \sum_{i=2}^{m-3}f(i)f(m-3-i) \\
&=&\sum_{i=0}^{m-1}f(i)f(m-1-i).
\end{eqnarray*}

Therefore, we prove the following lemma.

\begin{lemma}\label{th3}
Let $X$ be a finite set with $|X|=n$. Then the Hilbert series of the
free dendriform algebra $DD(X)$ is
$$
\mathcal {H}(DD(X), t)=\sum_{m\geq1} f(m)n^mt^m,
$$
where $f(m)$ satisfies the recursive relation ($f(0)=1$):
\begin{eqnarray*}
f(m)=\sum_{i=0}^{m-1}f(i)f(m-1-i),\ ~m\geq 1.
\end{eqnarray*}
\end{lemma}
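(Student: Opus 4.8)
The plan is to establish that $DD(X) = \oplus_{m \geq 1} DD_m$ is a finitely graded dendriform algebra and that the dimensions $f(m)n^m$ satisfy the stated recursion. The entire argument rests on Corollary \ref{cor1}, which identifies $Irr(S)$ — the set of normal $DD$-words — as a $k$-basis of $DD(X)$. So the real content is a careful bookkeeping of how normal $DD$-words of degree $m$ decompose, and translating Definition \ref{dd} into a counting recursion.

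First I would make the grading precise: declare $DD_m$ to be the $k$-span of all normal $DD$-words $u$ with $|u| = m$ (the number of $X$-letters). Since Corollary \ref{cor1} gives a basis consisting of normal $DD$-words, and each such word has a well-defined degree, we get $DD(X) = \oplus_{m \geq 1} DD_m$ as vector spaces. I would verify the grading condition $\delta(DD_i, DD_j) \subseteq DD_{i+j}$ for $\delta \in \{\prec, \succ\}$ by noting that $|[u\ \delta\ v]| = |u| + |v|$, so that the product of homogeneous elements lands in the correct graded piece; this uses the explicit formulas for $[u \prec v]$ and $[u \succ v]$ given just after Definition \ref{l}. Finite-dimensionality of each $DD_m$ follows because $X$ is finite and there are only finitely many normal $DD$-words of any fixed degree.

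The heart of the argument is the recursion. Writing $\dim_k(DD_m) = f(m)n^m$, I would argue that $f(m)$ counts normal $DD$-words of degree $m$ per choice of the $n^m$ letter-assignments; more cleanly, $f(m)$ counts the underlying \emph{shapes} of normal $DD$-words of degree $m$. Definition \ref{dd} partitions normal $DD$-words of degree $m > 2$ into three families: $x \prec \lceil u \rceil$, $x \succ \lceil u \rceil$ (each contributing $f(m-1)$ shapes since $\lceil u\rceil$ ranges over all shapes of degree $m-1$), and $(x \succ \lceil u_1\rceil) \succ \lceil u_2\rceil$ (contributing $\sum_{i} f(i) f(m-1-i)$ over the admissible splittings of the remaining $m-1$ letters). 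The main obstacle — and the step requiring the most care — is verifying that these three families are mutually disjoint and jointly exhaustive, i.e. that every normal $DD$-word of degree $m$ arises in exactly one way; this is where one must confirm that the leading-letter-and-shape decomposition in Definition \ref{dd} is genuinely unique, so that no word is double-counted and the sum over splittings in the third family correctly assembles into the telescoped closed form $f(m) = \sum_{i=0}^{m-1} f(i) f(m-1-i)$.

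Finally, I would check the base cases against the recursion: from the basis description one reads off $\dim_k(DD_1) = n$ and $\dim_k(DD_2) = 2n^2$, giving $f(1) = 1$ and $f(2) = 2$, and with the convention $f(0) = 1$ the recursion reproduces $f(1) = f(0)f(0) = 1$ and $f(2) = f(0)f(1) + f(1)f(0) = 2$, confirming consistency. Assembling the graded dimensions into the generating function then yields $\mathcal{H}(DD(X), t) = \sum_{m \geq 1} f(m) n^m t^m$, completing the proof.
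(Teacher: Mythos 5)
Your proposal is correct and follows essentially the same route as the paper: grade $DD(X)$ by word length, invoke Corollary \ref{cor1} to reduce the count to normal $DD$-words, split the degree-$m$ words ($m>2$) into the three families of Definition \ref{dd} contributing $f(m-1)$, $f(m-1)$, and $\sum_{i=1}^{m-2}f(i)f(m-1-i)$ shapes respectively, and absorb the first two terms via the convention $f(0)=1$ to get the closed recursion. Your explicit attention to disjointness and exhaustiveness of the three families is a point the paper passes over silently, but it is not a different argument.
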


Now, we describe the Hilbert series of $DD(X)$ with another way.
\\

Let $A,~B,~C$ be the subspaces of $DD(X)$ with $k$-bases
\begin{eqnarray*}
&&\{x\prec \lceil u\rceil|~x\in X,\ \lceil u\rceil \mbox{ is a
normal }
DD\mbox{-word}\},\\
&&\{x\succ \lceil u\rceil|~x\in X,\ \lceil u\rceil \mbox{ is a
normal }
DD\mbox{-word}\},\\
&&\{(x\succ \lceil u_1\rceil)\succ \lceil u_2\rceil|~x\in X,\ \lceil
u_1\rceil,\lceil u_2\rceil \mbox{ are normal } DD\mbox{-words}\},
\end{eqnarray*}
respectively. Assume that their Hilbert series are $\mathcal {H}(A,
t),~\mathcal {H}(B, t),~\mathcal {H}(C, t),$ respectively. Clearly,
we have
$$\mathcal {H}(B, t) = \mathcal {H}(A, t).$$

Noting that $A$ has a $k$-basis
$$
\{x_i\prec x_j|~x_i,~x_j\in X\}\bigcup \{x\prec \lceil u\rceil|~|u|>
1,~x\in X,\ \lceil u\rceil \mbox{ is a normal } DD\mbox{-word}\},
$$
we have
\begin{eqnarray}\label{equ1}
\mathcal {H}(A, t) &= &n^2t^2 + nt\times (\mathcal {H}(A, t) +
\mathcal {H}(B, t) + \mathcal {H}(C, t))\nonumber \\
&=& n^2t^2 + nt\times (2\mathcal {H}(A, t)+ \mathcal {H}(C, t)).
\end{eqnarray}

Since $C$ has a $k$-basis
\begin{eqnarray*}
&&\ \{(x_i\succ x_j)\succ x_k|~x_i, x_j, x_k\in X\}\\
&& \bigcup \{(x_i\succ x_j)\succ \lceil u\rceil|x_i, x_j\in X, |u|>
1,\
\lceil u\rceil \mbox{ is a normal } DD\mbox{-word}\}\\
&& \bigcup~ \{(x_i\succ \lceil u\rceil)\succ x_j|x_i, x_j\in X, |u|>
1,\ \lceil u\rceil \mbox{ is a normal } DD\mbox{-word}\}\\
&& \bigcup\{(x\succ \lceil u\rceil)\succ \lceil v\rceil|~|u|, |v|>
1,\ \lceil u\rceil, \lceil v\rceil\mbox{ are normal }
DD\mbox{-words}\},
\end{eqnarray*}
we have
\begin{eqnarray}\label{equ2}
\mathcal {H}(C, t) &= &n^3t^3 + 2n^2t^2\times (\mathcal {H}(A, t) +
\mathcal {H}(B, t) + \mathcal {H}(C, t)) + nt\times (\mathcal {H}(A,
t) \nonumber\\
&&+ \mathcal {H}(B, t) + \mathcal {H}(C, t))^2\nonumber\\
&=&nt\times(nt+(2\mathcal {H}(A, t) + \mathcal {H}(C, t)))^2
\end{eqnarray}

From equations (\ref{equ1}) and (\ref{equ2}), we obtain
$$
\mathcal {H}(A, t) = \frac{1-2nt\pm \sqrt{1-4nt}}{2}.
$$
Since $\mathcal {H}(A, 0) = 0,$ we have
$$
\mathcal {H}(A, t) = \frac{1-2nt- \sqrt{1-4nt}}{2}.
$$
Therefore,
$$
\mathcal {H}(C, t) = \frac{1-(1-2nt)\sqrt{1-4nt}}{2nt}-2+nt.
$$

Thus, we have the following theorem.
\begin{theorem}
 Let $X$ be a finite set with
$|X|=n$. The Hilbert series of the free dendriform algebra $DD(X)$
is
\begin{eqnarray*}
\mathcal {H}(DD(X), t) =\frac{1-2nt-\sqrt{1-4nt}}{2nt}.
\end{eqnarray*}
\end{theorem}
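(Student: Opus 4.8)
The plan is to read off the decomposition of $DD(X)$ directly from the classification of its basis and then combine the closed forms already computed for $\mathcal{H}(A,t)$ and $\mathcal{H}(C,t)$. By Corollary \ref{cor1}, the normal $DD$-words form a $k$-basis of $DD(X)$, and Definition \ref{dd} sorts every such word into exactly one of four mutually exclusive shapes: a single variable $x\in X$; a word $x\prec\lceil u\rceil$; a word $x\succ\lceil u\rceil$; or a word $(x\succ\lceil u_1\rceil)\succ\lceil u_2\rceil$. These four families are precisely the bases of $DD_1$, $A$, $B$, and $C$ recorded above (note that the last three consist only of words of degree $\geq 2$). Since the classification is both exhaustive and disjoint, I obtain a direct sum decomposition of graded vector spaces
$$DD(X) = DD_1 \oplus A \oplus B \oplus C.$$

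First I would use additivity of the Hilbert series over this decomposition, together with the relation $\mathcal{H}(B,t)=\mathcal{H}(A,t)$ and the fact $\mathcal{H}(DD_1,t)=nt$, to write
$$\mathcal{H}(DD(X),t) = nt + 2\mathcal{H}(A,t) + \mathcal{H}(C,t).$$
Then I would substitute the solved expressions $\mathcal{H}(A,t)=\frac{1-2nt-\sqrt{1-4nt}}{2}$ and $\mathcal{H}(C,t)=\frac{1-(1-2nt)\sqrt{1-4nt}}{2nt}-2+nt$ and simplify. After the cancellation of the two $\pm nt$ terms, I would put everything over the common denominator $2nt$: the coefficient of $\sqrt{1-4nt}$ collapses from $-2nt-(1-2nt)$ to $-1$, while the rational part reduces to $1-2nt$, producing exactly
$$\mathcal{H}(DD(X),t) = \frac{1-2nt-\sqrt{1-4nt}}{2nt}.$$

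The genuinely substantive step is the first one: verifying that the four shapes in Definition \ref{dd} really do partition the normal $DD$-words without overlap, so that the Hilbert series is additive and the degree-$1$ part contributes $nt$ exactly once. Everything after that is routine algebra, and all the heavy lifting—the Gr\"obner--Shirshov basis $S$, the normal form, and the functional equations \eqref{equ1}--\eqref{equ2}—has already been done. The only point requiring care in the final simplification is the bookkeeping of the single-variable term, which must be added once for $DD_1$ and not absorbed into the degree-$\geq 2$ contributions coming from $A$, $B$, and $C$.
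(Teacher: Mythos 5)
Your proposal is correct and is essentially the argument the paper leaves implicit: the paper derives $\mathcal{H}(A,t)$ and $\mathcal{H}(C,t)$ from the functional equations (\ref{equ1})--(\ref{equ2}) and then asserts the theorem, the unstated step being exactly your decomposition $\mathcal{H}(DD(X),t)=nt+2\mathcal{H}(A,t)+\mathcal{H}(C,t)$ coming from the partition of normal $DD$-words in Definition \ref{dd}. Your algebraic simplification checks out, so this fills the gap faithfully rather than taking a different route.
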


\ \

We now give an exact expression of the function $\mathcal {H}(DD(X),
t)$.

 For $t\leq \frac{1}{4n},$ we have
$$
\sqrt{1-4nt}=(1+(-4nt))^{\frac{1}{2}}=1+\sum_{i=1}^\infty
\frac{\frac{1}{2}(\frac{1}{2}-1)\cdots(\frac{1}{2}-i+1)}{i!}\times(-1)^i4^in^it^i.
$$

From this and Lemma \ref{th3} we get the following theorem.
\begin{theorem}\label{th4}
Let $X$ be a finite set with $|X|=n$. Then the Hilbert series of the
free dendriform algebra $DD(X)$ is
\begin{eqnarray*}
\mathcal {H}(DD(X), t)&=&\sum_{m=1}^\infty
\frac{1\times3\times\cdots(2m-1)\times2^m}{(m+1)!}n^mt^m\\
&=&\sum_{m=1}^\infty \frac{(2m)!\times n^m\times t^m}{(m+1)!m!}.
\end{eqnarray*}
Therefore, $dim_k(DD_m)=\frac{(2m)!\times n^m}{(m+1)!m!},\ m\geq 1$.
\end{theorem}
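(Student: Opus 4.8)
The plan is to read off the coefficient of $t^m$ directly from the closed form $\mathcal{H}(DD(X),t)=\frac{1-2nt-\sqrt{1-4nt}}{2nt}$ established in the theorem above, combined with the binomial expansion of $\sqrt{1-4nt}$ displayed just before the statement. The whole proof is a coefficient extraction, so the work is organized around simplifying the generalized binomial coefficients and then reindexing.

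First I would simplify $\binom{1/2}{i}=\frac{1}{i!}\cdot\frac12(\frac12-1)\cdots(\frac12-i+1)$. Writing each factor as $\frac{1-2j}{2}$ for $j=0,1,\dots,i-1$, the numerator collapses to $(-1)^{i-1}(2i-3)!!$ (using the convention $(-1)!!=1$), so that $\binom{1/2}{i}=\frac{(-1)^{i-1}(2i-3)!!}{2^i\,i!}$. Multiplying by the factor $(-1)^i4^in^i$ from the expansion, and using $4^i/2^i=2^i$ together with $(-1)^{2i-1}=-1$, the coefficient of $t^i$ in $\sqrt{1-4nt}$ equals $-\frac{(2i-3)!!\,2^i}{i!}n^i$. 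Consequently $1-2nt-\sqrt{1-4nt}=-2nt+\sum_{i\ge1}\frac{(2i-3)!!\,2^i}{i!}n^it^i$.

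The next step exploits a clean cancellation: the $i=1$ term of that sum is $\frac{(-1)!!\cdot 2}{1!}nt=2nt$, which exactly cancels the leading $-2nt$ (this is also the sanity check that $\mathcal{H}(DD(X),0)=0$, matching the normalization used in the preceding theorem). Dividing the surviving sum $\sum_{i\ge2}\frac{(2i-3)!!\,2^i}{i!}n^it^i$ by $2nt$ and substituting $m=i-1$ gives $\mathcal{H}(DD(X),t)=\sum_{m\ge1}\frac{(2m-1)!!\,2^m}{(m+1)!}n^mt^m$, which is the first displayed form. Finally I would pass to the factorial form via the identity $(2m)!=2^m\,m!\,(2m-1)!!$ (split $(2m)!$ into its even factors $2\cdot4\cdots(2m)=2^m m!$ and its odd factors $1\cdot3\cdots(2m-1)=(2m-1)!!$), whence $(2m-1)!!\,2^m=\frac{(2m)!}{m!}$ and the coefficient becomes $\frac{(2m)!}{(m+1)!\,m!}$; reading it off yields the stated value of $dim_k(DD_m)$.

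The only real obstacle is bookkeeping, namely keeping the signs, the shifted double factorial $(2i-3)!!$, and the index shift $m=i-1$ mutually consistent. As an independent check on the answer I would note that the recursion $f(m)=\sum_{i=0}^{m-1}f(i)f(m-1-i)$ of Lemma \ref{th3}, with $f(0)=1$, is precisely the Catalan recursion, so $f(m)$ must be the Catalan number $\frac{1}{m+1}\binom{2m}{m}=\frac{(2m)!}{(m+1)!\,m!}$; this agrees with the generating-function computation above and confirms that the two derivations of $\mathcal{H}(DD(X),t)$ are consistent.
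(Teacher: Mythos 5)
Your proposal is correct and follows essentially the same route as the paper: the paper likewise obtains the coefficients by substituting the displayed binomial expansion of $\sqrt{1-4nt}$ into the closed form $\frac{1-2nt-\sqrt{1-4nt}}{2nt}$ and invoking Lemma \ref{th3} to identify $\dim_k(DD_m)$ with the coefficient of $t^m$; you simply carry out the coefficient bookkeeping (which the paper leaves implicit) in full, and your simplifications and the index shift check out. The Catalan-recursion cross-check is a nice confirmation but not a departure from the paper's argument.
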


Now, by using Theorem \ref{th4}, we show that Gelfand-Kirillov
dimension of the free dendriform algebra $DD(X)$ is infinite when
$|X|$ is finite.
\begin{definition}\cite{Kukin}
Let $R$ be a finitely presented algebra over a field $k$ and $x_1,
x_2, \ldots, x_n$ be its generators. Consider $R=\bigcup_{d\in
N}V_{(d)}$, where $V_{(d)}$ is spanned by all the monomials in $x_i$
of length $\leq d$. The quantity
$$
GKR = \overline{\lim_{d\rightarrow \infty}}\frac{log\ dim_k
V_{(d)}}{log\ d}
$$
is called the Gelfand-Kirillov dimension of $R$.
\end{definition}

\begin{theorem}
Let $X$ be a finite set with $|X|=n$.  Then the Gelfand-Kirillov
dimension of free dendriform algebra $DD(X)$ is
$$
GKDD(X)=\infty.
$$
\end{theorem}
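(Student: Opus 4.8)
The plan is to reduce the statement to a single growth estimate and then feed it into the definition of $GK$ dimension. By Theorem \ref{th4} we already know the exact graded dimensions,
$$
dim_k(DD_m)=\frac{(2m)!}{(m+1)!\,m!}\,n^m=C_m\,n^m,
$$
where $C_m=\frac{1}{m+1}\binom{2m}{m}$ is the $m$-th Catalan number. In the notation of the $GK$-dimension definition, a monomial of length $m$ is a normal $DD$-word $u$ with $|u|=m$, so the space $V_{(d)}$ spanned by all monomials of length $\leq d$ is exactly $V_{(d)}=\bigoplus_{m=1}^{d}DD_m$. Hence
$$
dim_k V_{(d)}=\sum_{m=1}^{d}dim_k(DD_m)\;\geq\;dim_k(DD_d)=C_d\,n^d,
$$
and it suffices to show that this lower bound already grows faster than any power of $d$.

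First I would pin down an elementary exponential lower bound for $C_d n^d$, avoiding the full Stirling asymptotics. Since $\binom{2d}{d}$ is the largest of the $2d+1$ binomial coefficients summing to $4^d$, one has $\binom{2d}{d}\geq \frac{4^d}{2d+1}$, and therefore
$$
dim_k(DD_d)=\frac{1}{d+1}\binom{2d}{d}n^d\;\geq\;\frac{(4n)^d}{(d+1)(2d+1)}.
$$
This is the heart of the argument: the dimensions grow at least like $(4n)^d$ up to a polynomial factor, i.e. exponentially in the length $d$.

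Finally I would substitute this bound into the definition. Taking logarithms,
$$
\log\bigl(dim_k V_{(d)}\bigr)\;\geq\;d\log(4n)-\log\bigl((d+1)(2d+1)\bigr).
$$
Because $|X|=n\geq 1$ we have $\log(4n)\geq\log 4>0$, so the right-hand side grows linearly in $d$, while the denominator $\log d$ in the quotient grows only logarithmically. Consequently
$$
\frac{\log\bigl(dim_k V_{(d)}\bigr)}{\log d}\;\geq\;\frac{d\log(4n)-\log\bigl((d+1)(2d+1)\bigr)}{\log d}\;\longrightarrow\;\infty
$$
as $d\to\infty$, so the $\overline{\lim}$ defining $GKDD(X)$ is $\infty$, which is the claim.

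I do not expect a genuine obstacle here: once the exact formula of Theorem \ref{th4} is available, the computation is the standard observation that exponential growth forces infinite $GK$ dimension (the logarithm of an exponential is linear, and linear over $\log d$ diverges). The only points that require a word of care are the identification $V_{(d)}=\bigoplus_{m=1}^{d}DD_m$, i.e. matching the word ``length'' in the definition with the grading degree $|u|$, and choosing a clean closed-form lower bound for the Catalan numbers so that the estimate is self-contained.
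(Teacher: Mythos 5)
Your proposal is correct and follows essentially the same route as the paper: bound $\dim_k V_{(d)}$ below by $\dim_k(DD_d)$ from Theorem \ref{th4} and observe that the logarithm of this quantity grows linearly in $d$, so the quotient by $\log d$ diverges. Your use of the elementary estimate $\binom{2d}{d}\geq 4^d/(2d+1)$ is in fact a cleaner way to extract the exponential lower bound than the paper's term-by-term limit manipulation of $\ln\frac{(2d)!\,n^d}{(d+1)!\,d!}$, but the underlying argument is the same.
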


\begin{proof}
For a fixed natural $d$, let $DD_{(d)}$ be the subspace spanned by
all the monomials in $x_i$ of length $\leq d$. Then
\begin{eqnarray*}
dim DD_{(d)}&=&\sum_{i=1}^d dim_k(DD_i) \geq dim_k(DD_d).
\end{eqnarray*}

Therefore,
\begin{eqnarray*}
GKDD(X)&\geq & \overline{\lim_{d\rightarrow \infty}}\frac{log\
dim_k(DD_d)}{log\ d} = \overline{\lim_{d\rightarrow
\infty}}\frac{ln\
\frac{(2d)!\times n^d}{(d+1)!d!}}{ln\ d}\\
&= & \overline{\lim_{d\rightarrow \infty}}\frac{d\ ln\ 2n+\sum_{i=1}^dln(2i-1)-\sum_{i=1}^{d+1}ln\ i}{ln\ d}\\
&= & \overline{\lim_{d\rightarrow \infty}}d\
ln2n+\overline{\lim_{d\rightarrow \infty}}\sum_{i=1}^d\frac{ln\frac{2i-1}{i}}{ln\ d}-1\\
&= & \overline{\lim_{d\rightarrow \infty}}(d\ ln2n) +
\overline{\lim_{d\rightarrow \infty}}(\frac{d}{ln\
d}\sum_{i=1}^d\frac{ln\ (2-\frac{1}{i})}{d})-1\\
 &= & \infty.
\end{eqnarray*}

 \ \hfill\end{proof}

\end{document}